\newtheorem{theorem}{Theorem}[section]
\newtheorem{lemma}[theorem]{Lemma}
\newtheorem{remark}[theorem]{Remark}
\newtheorem{definition}[theorem]{Definition}
\newtheorem{assumption}[theorem]{Assumption}
\newcommand{\N}{\mathbb{N}}
\newcommand{\Z}{\mathbb{Z}}
\newcommand{\R}{\mathbb{R}}
\newcommand{\B}{\mathcal{B}}
\newcommand{\X}{\mathcal{X}}
\newcommand{\Y}{\mathcal{Y}}
\newcommand{\M}{\mathcal{M}}
\renewcommand{\P}{\mathbb{P}}
\newcommand{\E}{\mathbb{E}}
\newcommand{\law}[1]{\mathcal{L}(#1)}
\newcommand{\Pas}{\text{a.s.}}
\newcommand{\eps}{\varepsilon}
\newcommand{\la}{\lambda}
\newcommand{\ga}{\gamma}
\newcommand{\ka}{\kappa}
\newcommand{\dtv}{d_{\text{TV}}}
\newcommand{\dint}{\mathrm{d}}  % integrálás d-betű
\begin{document}

%In revising your paper, please observe the following editorial conventions.
%1. The maximum length of a paper is eight journal pages. A full text page of ORL, 
%without blank space, figures, formulas, tables etc., contains about 1,250 words.
%2. The first page of a paper contains
%(a) a short title,
%(b) names and affiliations of the authors,
%(c) an abstract of at most 80 words, without references to the list at the end of the paper,
%(d) at most six keywords,
%(e) a footnote giving name, postal address and e-mail address of the corresponding author.
%3. Citations in the text are by bracketed numbers. Author's names are given only when necessary.
%4. Displayed formulas are numbered only when referred to.
%5. Footnotes in the body of the text are not allowed.
%6. Concluding remarks, if any, give new material, not another summary of the paper.
%7. Support information is cited in an acknowledgement section, not in a footnote.

% Kendall: (G/GI/1) inter-arrival time/service time/number of servers
%\begin{frontmatter}
	
	\title{Ergodic theorems for queuing systems with dependent inter-arrival times}
		
	\author{Attila Lovas\thanks{Alfr\'ed R\'enyi Institute of Mathematics, Re\'altanoda utca 13-15, 1053 Budapest
	and Budapest University of Technology and Economics, Egry J\'ozsef utca 1, 1111 Budapest, Hungary; lovas@renyi.hu}
	\and Mikl\'os R\'asonyi\thanks{Alfr\'ed R\'enyi Institute of Mathematics, Re\'altanoda utca 13-15, 1053 Budapest, Hungary;
	rasonyi@renyi.hu}}

\maketitle{}

	\begin{abstract}
		% max. 80 words
		% Purpose of the research
		% Principal results
		% Major conclusions
		
		We study a G/GI/1 single-server queuing model with i.i.d.\ 
		service times that are independent of a stationary process of inter-arrival times.
		We show that the distribution of the waiting time converges to a stationary law 
		as time tends to infinity provided that 
		inter-arrival times satisfy a G\"artner-Ellis type condition. 
		A convergence rate is given and a law of large numbers established. These results provide tools 
		for the statistical analysis of such systems, transcending the standard case 
		with independent inter-arrival times.
		% 80 words
	\end{abstract}
		
\textbf{Keywords:} Queuing; G/GI/1 queue; Dependent random variables; Inter-arrival times; Limit theorem; Law of large numbers

\section{Introduction}
% -1) Miért fontos queing-el foglalkozni, sorbanállási rendszerekkel milyen alkalmazási területeken találkozhatunk.

%Ezek érdekesek, de szerintem ilyen részletességgel nem mehetünk bele a motivációba.
At the beginning of the 20th century, Agner Krarup Erlang, a Danish engineer did 
the first steps in a new branch of operations research what we now call queuing theory \cite{erlang1909}. Erlang worked for a telephone company, where he created a mathematical model to determine the minimal number of telephones needed to process a given volume of calls. Nowadays, queuing theory arises in many fields of engineering sciences such as inventory management, logistics, transportation, industrial engineering, optimal service design, telecommunication, etc. Among the most recent and interesting applications, we can mention a new queuing theory approach for cost reduction in product-service design \cite{Nguyen2017}. Such solutions can be helpful for the service designer to find the optimized solution for the designed service. For applications of queueing theory in health care, we refer the reader to \cite{healthcare2013}, and readers with motivation in
communication systems should consult \cite{Zeephongsekul2006}.

% 0) Mi milyen típusú sorbanállási modellt vizsgálunk. 
In our paper, we consider a single-server queuing system with infinite buffer and first-in, first-out (FIFO) 
service discipline, where customers are numbered by $n\in\N$. 
Furthermore, we assume that there is a two-dimensional sequence of non-negative
random variables $(S_{n},Z_{n})_{n\in\Z}\in\mathbb{R}_{+}^{2}$ such that
the time between the arrival of customers $n+1$ and $n$ is described by $Z_{n+1}$, for each $n\in\N$. 
The service time for customer $n$ is given by $S_n$, for $n\in\N$. Note that we are working with the index set 
$\mathbb{Z}$ instead of $\mathbb{N}$ for the process $(S,Z)$, where the
extension to the negative time axis is for mathematical convenience.

The evolution of the waiting time $W_n$ of customer $n$ can be described by the \emph{Lindley recursion} 
\begin{equation*}
W_{n+1}=(W_n+S_{n}-Z_{n+1})_+,\,\, n\in\N,
\end{equation*}
where, for simplicity, $W_0:=0$ is assumed (i.e.\ we start with an empty queue).

% 4) Kingman várakozási idő asszimptotikájával kapcsolatos korai eredményei.
% Kingman's formula gives an approximation for the mean waiting time in a G/G/1 queue

% 5) Witt és Győrfi erdménye Wn határelosának exponenciális lecsengéséről Gärtner-Ellis feltétel mellett.

% 6) A várakozási idő határeloszláshoz való konvergenciájának sebességéről Borovkov mit ír az általános 
%esetben és ez egyébként nem alkalmas a sorbanállási rendszer statisztikai vizsgálatára.

% 7) GI/GI/1 esetben Borovkov Gärtner-Ellis típusú feltétel mellett a 
%mienkhez hasonló konvergencia rátát prezentál a 8-as tételben.

% 8) Akár a kiszolgálási idők, akár pedig az igények közötti idő i.i.d. a sorbanállási modell kezelhető

% 2) Mutatni kell olyan alkalmazási példákat, amelyek igazolják, hogy foglalkozni kell 
%azzal az esettel, amikor az érkezési idők sorozata nem i.i.d.
%  
The ergodic theory of general state space Markov chains (see e.g.\ \cite{mt})
allows to treat the case where $(S_n)_{n\in\Z}$, $(Z_n)_{n\in\Z}$ are i.i.d.\ sequences, independent of
each other. In many situations, however,
there is non-zero correlation between inter-arrival times, 
so the Poisson assumption for the arrival process, which makes queueing models amenable to simple analysis, does
not apply. One also needs to consider more general single-server queues, such as G/GI/1 and G/G/1.
Dependence arises often in queuing networks where arrival processes are departure processes of other queues.
Processing operations like batching or the presence of multiple, different classes of customers can also
cause dependence, see \cite{heffes1980,heffes1986,sriram,fendick1989,fendick1991,kim}, consult also the overview
in Subsection 1.1 of \cite{whitt-you}. 

These studies show that the theoretical foundations 
of queuing systems should also be extended to
cases where the process $(S,Z)$ is merely stationary. The mathematics for such a setting is an order of magnitude more difficult
as $(W_{n})_{n\in\mathbb{N}}$ fails to be a Markovian process.
Henceforth, we assume the sequence 
$(S_{n},Z_{n})_{n\in\Z}\in\mathbb{R}_{+}^{2}$ to be (strong-sense) stationary.

% 3) Van konvergencia stac. eloszláshoz tök általános esetben és ergodicitás is. Loynes cikke kell + Győrfi László és Morvai Gusztáv cikke.
As far as we know, Loynes was the first who studied the stability of waiting times in this general setting \cite{loynes1962} and introduced the terminology that a queue is \emph{subcritical} if $\E (S_0)<\E (Z_0)$, \emph{critical} if $\E (S_0)=\E (Z_0)$ and \emph{supercritical} if $\E (S_0) > \E (Z_0)$. Loynes proved for single-server queuing
systems that subcritical queues are stable, supercritical queues are unstable and
critical queues can be stable, properly substable, or unstable \cite{loynes1962}.
Stability of $W_n$, $n\in\N$ means here that there exist a unique limit distribution of $W_n$ as $n\to\infty$, 
whatever the initialization $W_{0}$ is. 

Gy\"orfi and Morvai extended Loynes' result in \cite{gyorfi2002} proving that for subcritical queues, a stronger version of stability called \emph{coupling} holds also true (see Theorem \ref{thm:gyorfi} below). 
We say that the sequence $(W_n)_{n\in\N}$ is \emph{coupled} with $(W_n')_{n\in\N}$ 
if this latter sequence is stationary and ergodic and there is an almost surely finite random variable $\tau$ such that $W_n = W_n' $ for $n>\tau$.
Gy\"orfi and Morvai's theorem concerning queues in this general setting reads
as follows:
\begin{theorem}\label{thm:gyorfi}
	Let $\xi_n = S_n-Z_{n+1}$ and assume that the process 
	$(\xi_n)_{n\in\Z}$ is ergodic with $\E (S_0)<\E (Z_0)$.
	Then $(W_n)_{n\in\N}$ is coupled with a stationary and ergodic $(W_n')_{n\in\Z}$ such that  $W_0'=\sup_{n\in\N} Y_n$, where
	$Y_0 = 0$ and $Y_n = \sum_{k=1}^n \xi_{-k}$, $n\ge 1$.
\end{theorem}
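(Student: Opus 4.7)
The plan is to follow Loynes' classical backward construction of a stationary version of the waiting time sequence, and then to obtain the coupling using Birkhoff's ergodic theorem applied to $(\xi_n)_{n\in\Z}$.

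First, I would unroll the Lindley recursion from $W_0=0$ to obtain the finite-maximum representation $W_n = \max_{0\le k\le n}\sum_{j=k}^{n-1}\xi_j$ (with the empty sum equal to zero), which makes $W_n$ a measurable functional of $\xi_0,\dots,\xi_{n-1}$ and, incidentally, shows $W_n\ge 0$. This representation is the workhorse of the whole argument.

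Second, I would define, for each $n\in\Z$, the Loynes extension
\begin{equation*}
W_n'\;:=\;\max\Bigl\{0,\;\sup_{k\le n-1}\sum_{j=k}^{n-1}\xi_j\Bigr\}.
\end{equation*}
By construction $(W_n')_{n\in\Z}$ is a shift-equivariant functional of $(\xi_n)_{n\in\Z}$, hence inherits stationarity and ergodicity from $(\xi_n)$; a short computation shows it satisfies the Lindley recursion $W_{n+1}'=(W_n'+\xi_n)_+$. Specialising to $n=0$ and substituting $m=-k$ identifies $W_0'$ with $\sup_{m\in\N}Y_m$ (using $Y_0=0$). To see that this supremum is almost surely finite I would invoke Birkhoff's ergodic theorem on $(\xi_n)$: the drift condition $\E\xi_0=\E S_0-\E Z_0<0$ forces $Y_m/m\to \E\xi_0<0$ a.s., hence $Y_m\to -\infty$ a.s.

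Third, to obtain the coupling I would split the supremum defining $W_n'$ according to the sign of $k$: the part with $0\le k\le n-1$ together with the constant $0$ contributes exactly $W_n$, while the part with $k<0$ factors as $\Sigma_n+\sup_{m\ge 1}Y_m$, where $\Sigma_n:=\sum_{j=0}^{n-1}\xi_j$. The same ergodic theorem gives $\Sigma_n\to -\infty$ a.s.; combined with the a.s. finiteness of $\sup_{m\ge 1}Y_m$, this makes the random time $\tau:=\inf\{N\ge 0:\Sigma_n+\sup_{m\ge 1}Y_m\le 0\text{ for all }n\ge N\}$ a.s. finite. For $n>\tau$ the $k<0$ contribution is non-positive and hence dominated by $W_n\ge 0$, so $W_n'=W_n$, which is the asserted coupling.

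The main difficulty is the third step: one has to simultaneously control the backward supremum $\sup_{m\ge 1}Y_m$ (a tail functional of the past of $(\xi_n)$) and the forward partial sums $\Sigma_n$, both driven by the same drift $\E\xi_0<0$ but appearing on opposite ends of the trajectory. Working on the two-sided stationary extension of $(\xi_n)_{n\in\Z}$ is precisely what allows us to realise both on a common probability space and thus to produce a pathwise coupling time $\tau$.
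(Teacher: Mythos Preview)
The paper does not prove Theorem~\ref{thm:gyorfi}; it is quoted as a known result from Gy\"orfi and Morvai \cite{gyorfi2002}. Your argument is correct and is precisely the classical Loynes backward construction combined with Birkhoff's ergodic theorem, which is the route taken in the cited reference, so there is nothing to compare against in the present paper.
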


Little is known about the limit distribution and the speed of convergence. Actually, in 1961, Kingman published his famous approximate formula for the expectation of the stationary waiting time in G/G/1 queues \cite{kingman1961}. Kingman's approximation proved to be
very accurate when the system is operating close to saturation \cite{Harrison1993}.

In two consecutive papers \cite{AbateWhitt1994I,AbateWhitt1996II}, Abate, Choudhury and 
Whitt analyzed the tail probabilities of the steady-state waiting time, sojourn time and workload and suggested exponential approximations for them. They proved that in G/GI/1
queuing systems with service times, which are independent of an arbitrary stationary arrival process, asymptotic formulas for tail probabilities of the steady-state waiting time, sojourn time and workload are simply related \cite{AbateWhitt1996II}. 
 
For the speed of convergence, we found Theorem 4 on page 25 of 
\cite{bborovkov72} which gives the following upper bound
\begin{equation}\label{eq:borovkov}
	|\P (W_n\in B)-\P (W_0'\in B)|\le \P \left(\min_{0<k<n} X_k>\max (W_1,W_0'+\xi_0)\right),
\end{equation}
where $\xi_n$ is as in Theorem \ref{thm:gyorfi}, $(X_n)_{n\in\N}$ is defined as $X_0 = 0$, $X_n = \sum_{k=1}^{n} \xi_k$, $n\ge 1$
and $B\subset \mathbb{R}_{+}$ is an arbitrary Borel set. By taking the supremum in $B$ on the left hand side, 
we can easily obtain an estimate for the total variation distance of the corresponding distributions. 
However, the expression standing on the right hand side of \eqref{eq:borovkov} does not provide a concrete rate estimate.
For further information, the reader should consult the textbooks \cite{bborovkov} (in Russian) and \cite{borovkov} (in English).

The novelty of the present note is to regard the queuing systems under consideration as
Markov chains in random environment, i.e.\ Markov chains whose transition law is a stationary random process.
Such a framework allows to use the toolkit of Markov chain theory as presented e.g.\ in \cite{mt}:
Foster-Lyapunov and minorization conditions. We will rely on the recent advances made by \cite{attila} in the theory of
Markov chains in random environments. We will obtain rates for the convergence of the law of $W_{n}$
to its limit as $n\to\infty$ and a law of large numbers. Such results 
are anticipated on pages 503--504 of \cite{borovkov} but they are not
worked out in that book, neither elsewhere, as far as we know. 

If the inter-arrival times are i.i.d\ and the service times merely stationary \emph{and} these
two sequence are independent then the process $W$ is a Markov chain with driving noise $Z$ in the random environment
provided by $S$. Vice versa, if the service times are i.i.d.\ and the (independent from service) 
inter-arrival times stationary then $W$ is a Markov chain driven by $S$ in the random environment $Z$.
Hence both these special cases of queuing systems fit into the framework we propose.
More complex (e.g.\ multiserver) queuing systems 
could be analysed along similar lines but we do not pursue such ramifications here.
When both $S$ and $Z$ are merely stationary \emph{or} their independence fails then
the techniques of the present paper do not seem to be applicable.

%for the statistical analysis 

%In the present work  
%The case where \emph{both} sequences are only stationary cannot be treated within this framework. 

Before expounding our new contributions, we introduce some more notations.
Throughout this paper we will be working on a probability space $(\Omega,\mathcal{F},P)$.
For a Polish space $\mathcal{X}$, we denote by $\B(\X)$ its Borel sigma-algebra. We denote by $\E[Z]$
the expectation of a real-valued random variable $Z$. For a $\mathcal{X}$-valued
random variable $X$ we will denote by $\law{X}$ its law on on $\B(\X)$.  
The set of probability measures on $\B(\mathcal{X})$ is denoted by $\M_1(\mathcal{X})$.
The total variation metric on $\M_1(\mathcal{X})$ is defined by 
	\begin{equation*}
	\dtv (\mu_1,\mu_2)=|\mu_1-\mu_2|(\X),\quad \mu_1,\mu_2\in\M_1(\mathcal{X}),
	\end{equation*}
	where $|\mu_1-\mu_2|$ denotes the total variation of the signed measure $\mu_1-\mu_2$. 
We do not indicate the dependence of the metric $\dtv$ on $\mathcal{X}$ since the latter
will always be clear from the context.
	
We now present our standing assumptions.
In a stable system service times should be shorter on average than inter-arrival times (i.e.\ we work in the
subcritical regime).
In our approach we also need that the service time sequence is independent of inter-arrival times.
So we formulate the following hypothesis.

\begin{assumption}\label{basic}
We stipulate
that $\E[S_0]<\E[Z_{0}]$ (where the latter may be infinity).
The sequences $(S_{n})_{n\in\mathbb{N}}$ and $(Z_{n})_{n\in\mathbb{N}}$ are independent.	
\end{assumption}

\begin{definition} We say that a sequence $(Y_n)_{n\in\N}$ of
real-valued random variables satisfies a \emph{G\"artner-Ellis-type condition} if
there is $\eta>0$ such that the limit 
		\begin{equation}\label{labbbe}
		\Gamma(\alpha):=\lim_{n\to\infty}\frac{1}{n}\ln \E\left[e^{\alpha(Y_1+\ldots+Y_n)}\right]
		\end{equation}
		exists for all $\alpha\in (-\eta,\eta)$ and $\Gamma$ is differentiable on $(-\eta,\eta)$. 
\end{definition}

\begin{remark}
{\rm The notion above is clearly inspired by the G\"artner-Ellis theorem, see Setion 2.3 of
\cite{dembo-zeitouni}, and it holds in a large class of
stochastic processes $Y$, well beyond the i.i.d.\ case. 
	For instance, let $Y_n=\phi(H_n)$ for a measurable 
	$\phi:\R^m\to\R$ satisfying a suitable growth condition and let $(H_{n})_{n\in\mathbb{N}}$ be an $\R^m$-valued  
	sufficiently regular Markov chain started from its invariant distribution. Then 
	\eqref{labbbe} holds true for all $\eta>0$, see Theorem 3.1 of \cite{KM2} for a precise formulation. 
See also Theorem 2.1 of \cite{dehling} for a non-Markovian example.

% 5) Witt és Győrfi erdménye Wn határelosának exponenciális lecsengéséről Gärtner-Ellis feltétel mellett.
The use of G\"artner-Ellis-type conditions is not new in queuing theory: see e.g.\ 
Section 3 in \cite{gyorfi2002} and also \cite{gw},
where the exponential tail of the limit distribution of the queue length is studied 
when the arrivals are weakly dependent.
Here it is used to ensure (jointly with $\E[S_{0}]<\E[Z_{0}]$) the average contractivity of the system dynamics, see Assumptions
\ref{as:drift} and \ref{as:LT} below.}
	
%	We mention another example: let $Y_n=\sum_{i=-\infty}^{\infty} a_i\zeta_{n-i}$, 
%			where $\zeta_i$, $i\in\Z$ are independent and identically distributed 
%			$\R_{+}$-valued random variables with finite exponential moments of all orders
%			and $\sum_{i=-\infty}^{\infty} |a_i|<\infty$. Then
%\eqref{labbbe} is satisfied for this process by Theorem 2.1 of \cite{dehling}.}
\end{remark}			

We now recall a result on the ergodic behaviour of queuing systems with dependent
service times which was obtained in Theorem 4.7 of \cite{attila}.

\begin{theorem}\label{thm:queue} 
Let $(Z_{n})_{n\in\mathbb{N}}$ be an i.i.d.\ sequence and let $(S_{n})_{n\in\mathbb{N}}$
	be uniformly bounded, ergodic, satisfying a G\"artner-Ellis type condition. 
	Let us assume that $P(Z_{0}>z)>0$ for all $z>0$.
Then there exists a probability $\mu_{*}$ on $\mathcal{B}(\R_{+})$ such that 
		\begin{equation*}
		\dtv(\law{W_n},\mu_{*})\leq c_1\exp\left(-c_2n^{1/3}\right),
		\end{equation*}
		for some $c_1,c_2>0$. 
		Furthermore, for an arbitrary measurable and bounded $\Phi:\R_{+}\to\R$,
		\begin{equation}\label{torta}
		\frac{\Phi(W_1)+\ldots+\Phi(W_{n})}{n}\to \int_{\mathbb{R}_{+}} \Phi(z)\mu_{*}(\dint z),
		\end{equation}
		in probability.
		\hfill $\Box$
	\end{theorem}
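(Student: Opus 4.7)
The plan is to recognize $(W_n)$ as a Markov chain in the random environment $S=(S_n)_{n\in\Z}$, with driving i.i.d.\ noise $(Z_n)_{n\in\N}$, and then invoke the abstract machinery of \cite{attila}. Concretely, fix a realization of $S$; then the conditional transition kernel is $P_s(w,\cdot)=\law{(w+s-Z_1)_+}$, and by Assumption \ref{basic} together with the independence of $(S_n)$ and $(Z_n)$, the sequence $(W_n)_{n\in\N}$ is a quenched Markov chain whose kernel at step $n$ depends only on $S_n$. So the results of \cite{attila} on Markov chains in random environments are directly applicable, and the task reduces to verifying their hypotheses: a Foster--Lyapunov drift condition in random environment, a minorization condition, and an integrability/large deviation control on the environment.

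First I would build the drift. Take $V(w)=e^{\beta w}$ for a small $\beta>0$ to be chosen. Then
\begin{equation*}
\E\bigl[V(W_{n+1})\mid W_n=w,\,S_n=s\bigr]\le e^{\beta w}\,\E\bigl[e^{\beta(s-Z_1)}\bigr]+1,
\end{equation*}
since $(w+s-Z_1)_+\le w+s-Z_1$ on the event of positivity and $V\le 1$ otherwise. Using $\E[S_0]<\E[Z_0]$ and the Gärtner--Ellis-type condition applied to the sequence $\xi_n:=S_n-Z_{n+1}$ (whose log moment generating function $\Gamma$ is differentiable at $0$ with $\Gamma'(0)=\E[S_0]-\E[Z_0]<0$), one obtains that $\frac1n\ln \E[\exp(\beta\sum_{k=1}^n\xi_k)]\to \Gamma(\beta)<0$ for $\beta>0$ sufficiently small. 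In the random-environment framework of \cite{attila} this is exactly the kind of averaged contractivity needed: iterating the one-step bound gives an $n$-step drift of the form $\E[V(W_n)\mid S]\le e^{n\Gamma(\beta)+o(n)}V(W_0)+C_n(S)$ on a set of environments of high probability, controlled by the large deviations bound coming from Gärtner--Ellis.

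Second, I would verify the minorization. Using $P(Z_0>z)>0$ for all $z>0$ together with the uniform boundedness $S_n\le M$ a.s., for any $R>0$ we have
\begin{equation*}
P_s(w,\{0\})=\P(Z_1\ge w+s)\ge \P(Z_1\ge R+M)>0,\qquad w\in[0,R],\, s\in[0,M],
\end{equation*}
which is a uniform (in the environment) minorization on the small set $C=[0,R]$ with atom $\{0\}$. Combined with the drift, this produces, fibrewise, a geometric return time to $C$; the standard coupling/regeneration argument then yields geometric ergodicity of the quenched chain with rate depending on $S$.

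The main obstacle, and the reason the rate is $\exp(-c_2 n^{1/3})$ rather than purely geometric, is the passage from quenched to annealed statements. The quenched rate is a function of the empirical behaviour of $S$ over a block of length $n$, and one must control the (annealed) probability that this behaviour is atypical. This is precisely where the G\"artner--Ellis condition pays off: it produces exponential tail estimates for the relevant block sums, and the optimal balancing of the drift horizon against the large-deviation cost of the environment gives the subgeometric exponent $1/3$, as carried out in \cite{attila}. Once $\dtv(\law{W_n},\mu_*)\to 0$ summably fast is established, the law of large numbers \eqref{torta} follows by a standard argument: for $\Phi$ bounded, $\E|\Phi(W_n)-\int\Phi\,\dint\mu_*|\le 2\|\Phi\|_\infty\dtv(\law{W_n},\mu_*)\to 0$, so the Cesàro averages of $\Phi(W_n)$ converge in $L^1$, hence in probability, to $\int\Phi\,\dint\mu_*$.
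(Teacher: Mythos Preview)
Your framework is the right one and matches the paper: the paper simply cites Theorem~4.7 of \cite{attila} for this result (note the $\Box$ after the statement), and Remark~\ref{bom} confirms that the key structural point is exactly your observation that, with $S$ frozen, the origin $\{0\}$ is a uniformly reachable atom. Your drift with $V(w)=e^{\beta w}$ and the minorization $P_s(w,\{0\})\ge \P(Z_1\ge R+M)>0$ for $w\le R$, $s\le M$ are precisely the ingredients that \cite{attila} uses, and your explanation of why the annealed rate degrades to $\exp(-c_2 n^{1/3})$ is in line with the sketch given in the proof of Theorem~\ref{thm:TV}.

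There is, however, a genuine gap in your derivation of the law of large numbers. The displayed inequality
\[
\E\bigl|\Phi(W_n)-\textstyle\int\Phi\,\dint\mu_*\bigr|\le 2\|\Phi\|_\infty\,\dtv(\law{W_n},\mu_*)
\]
is false: the total variation distance controls $\bigl|\E\Phi(W_n)-\int\Phi\,\dint\mu_*\bigr|$, not the $L^1$ distance of the random variable $\Phi(W_n)$ to the constant $\int\Phi\,\dint\mu_*$. (Take $\Phi=1_A$ with $\mu_*(A)=1/2$ and $\law{W_n}=\mu_*$: the left side equals $1/2$, the right side is $0$.) Consequently your argument only yields convergence of $\E[\tfrac{1}{n}\sum_{k\le n}\Phi(W_k)]$, which does not by itself give convergence in probability of the averages; one still needs a decorrelation or coupling input to kill the variance. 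In \cite{attila} (and as summarized after \eqref{tortai} in the paper) this is obtained by a refined coupling/regeneration construction along the same small-set returns, not as a corollary of the TV rate. So for \eqref{torta} you should invoke the ergodic-average result of \cite{attila} directly rather than attempt to deduce it from the convergence of $\law{W_n}$.
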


In the present article we concentrate on the (arguably) more interesting case
where service times are independent but inter-arrival times may well be dependent.	
			
\begin{theorem}\label{thm:queue1} 
Let $(S_{n})_{n\in\mathbb{N}}$ be an i.i.d.\ sequence and let $(Z_{n})_{n\in\mathbb{N}}$
	be bounded, ergodic, satisfying a G\"artner-Ellis type condition. 
	Let us assume that $\E[e^{\beta_{0} S_{0}}]<\infty$ for some $\beta_{0}>0$ and $\law{S_{0}}$ has
	a density $s\to f(s)$ (w.r.t.\ the Lebesgue measure) which is bounded away from $0$ on 
	compact subsets of $\mathbb{R}_{+}$.
Then the conclusions of Theorem \ref{thm:queue} hold.
	\end{theorem}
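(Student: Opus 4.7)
The plan is to recast the Lindley recursion as a Markov chain in a random environment and invoke the abstract ergodic machinery of \cite{attila} that underpins Theorem \ref{thm:queue}. The situation is entirely symmetric to that theorem: here the i.i.d.\ sequence $(S_n)_{n\in\N}$ plays the role of the driving noise while the stationary sequence $(Z_n)_{n\in\N}$ plays the role of the random environment. Conditional on the environment, $(W_n)_{n\in\N}$ is a time-inhomogeneous Markov chain with one-step kernels
\begin{equation*}
P_z(w,A) = \P\bigl((w + S_0 - z)_+ \in A\bigr),\qquad z\in\R_+,\ A\in\B(\R_+),
\end{equation*}
and I would verify, for this family, the Foster--Lyapunov drift and minorization hypotheses required by \cite{attila}.

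For the drift I would take the exponential Lyapunov function $V(w)=e^{\gamma w}$ with $\gamma\in(0,\beta_0)$ to be fixed later. The elementary bound $e^{\gamma(x)_+}\le e^{\gamma x}+1$ together with the independence of $S_n$ from $Z_{n+1}$ yields
\begin{equation*}
(P_{Z_{n+1}}V)(w) \le \la_{n+1} V(w) + 1, \qquad \la_{n+1} := \E\bigl[e^{\gamma S_0}\bigr]\, e^{-\gamma Z_{n+1}}.
\end{equation*}
Boundedness of $(Z_n)$ and the G\"artner--Ellis hypothesis then imply that $\sum_{k=1}^n \ln\la_k = n\ln\E[e^{\gamma S_0}] - \gamma\sum_{k=1}^n Z_k$ satisfies a G\"artner--Ellis type condition with cumulant $\Psi(\alpha) = \alpha\ln\E[e^{\gamma S_0}] + \Gamma(-\alpha\gamma)$. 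A Taylor expansion gives $\Psi'(0) = \ln\E[e^{\gamma S_0}] - \gamma\E[Z_0] = \gamma(\E[S_0]-\E[Z_0]) + O(\gamma^2)$, which is strictly negative for all sufficiently small $\gamma>0$ by Assumption \ref{basic}. Fixing such a $\gamma$ delivers the average exponential contractivity required in \cite{attila}.

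For the minorization, let $z_{\max}\in\R_+$ be an almost-sure upper bound for $Z_0$ and fix any $K>0$; consider the candidate small set $C=[0,K]$. For $w\in C$, $z\in[0,z_{\max}]$ and $s\in(z_{\max}+K,\, z_{\max}+K+1)$ one has $s+z-w>0$, so $(w+S_0-z)_+$ admits at $s$ the Lebesgue density $f(s+z-w)$. The argument $s+z-w$ ranges in the compact set $[z_{\max},\, 2z_{\max}+K+1]$ on which $f$ is bounded below by some $\delta>0$, whence
\begin{equation*}
P_z(w,\cdot) \ge \delta\,\mathrm{Leb}\bigr|_{(z_{\max}+K,\, z_{\max}+K+1)},\qquad w\in C,\ z\in[0,z_{\max}],
\end{equation*}
which is the uniform minorization on $C$ needed by \cite{attila}.

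The principal obstacle is bookkeeping the constants in the framework of \cite{attila} carefully enough to extract the stretched-exponential rate $\exp(-c_2 n^{1/3})$: one must reconcile the level sets of $V$, the small set $C$, and the large-deviation control on $\sum\ln\la_k$ coming from the G\"artner--Ellis condition. This architecture is already in place for Theorem \ref{thm:queue} and transports essentially verbatim to the present symmetric setting, introducing no genuinely new technical device; the law of large numbers \eqref{torta} then follows from the same ergodic statement in \cite{attila}.
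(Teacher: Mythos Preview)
Your proposal is correct and follows essentially the same route as the paper: an exponential Lyapunov function $V(w)=e^{\bar\beta w}$, contractivity deduced from the G\"artner--Ellis condition together with $\E[S_0]<\E[Z_0]$ (the paper's Lemma~\ref{lem:queuing:DriftAndLT}), and a uniform minorization on sublevel sets exploiting the boundedness of $Z_0$ and the positivity of $f$ on compacts (the paper's Lemma~\ref{lem:queuing:minor1}). One small slip worth flagging: the contractivity hypothesis actually needed (Assumption~\ref{as:LT}) is $\limsup_n \E^{1/n}\bigl[\prod_{k}\la_k\bigr]<1$, i.e.\ $\Psi_\gamma(1)<0$, not $\Psi'(0)<0$; but since $\Psi_\gamma(1)=\ln\E[e^{\gamma S_0}]+\Gamma(-\gamma)$ has the same small-$\gamma$ expansion $\gamma(\E[S_0]-\E[Z_0])+O(\gamma^2)$, your choice of $\gamma$ still works---the paper supplies the one missing detail, namely that $\Gamma'(0)=\E[Z_0]$, via a convexity/Rockafellar argument.
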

	
\begin{remark}\label{bom}
{\rm The mathematical setting of Theorem \ref{thm:queue1} is significantly more involved that that of
Theorem \ref{thm:queue}. In Theorem \ref{thm:queue}, one may profit from the fact
that, freezing the values of the process $(S_{n})_{n\in\mathbb{N}}$, the waiting time becomes an inhomogeneous 
Markov chain with a particular state (the point $0$) which is a reachable atom.
In the proof of Theorem \ref{thm:queue1} one needs to guarantee ergodicity using 
a deeper coupling construction relying on the absolute continuity of the law of
$S_{0}$.}	
\end{remark}

The boundedness of $Z_{0}$ in Theorem \ref{thm:queue1} above is rather a stringent assumption though
it still covers a large class of models.
We can somewhat relax it in the next theorem, at the price of requiring more about $S_{0}$. Namely,
we assume an exponential-like tail for $S_{0}$ and for $Z_{0}$ a very light tail, like that 
of the Gumbel distribution at $-\infty$. It will become clear from the proof that requiring a thinner
tail for $S_{0}$ (e.g.\ Gaussian) would necessitate even more stringent tail assumptions
for $Z_{0}$. It would be nice to further relax these hypotheses in future work.
 
{}
%We also provide an extension of Theorem \ref{thm:queue} where boundedness of the service times
%can be weakened.

%\begin{theorem}\label{thm:queue2} 
%Let $(Z_{n})$ be an i.i.d.\ sequence and let $(S_{n})$
%	be ergodic such that $E[e^{\beta S_{0}}]<\infty$ for some $\beta>0$ and satisfying a G\"artner-Ellis
%	type condition. 
%	Let $f(z):=P(Y_{0}>z)$ for all $z\geq 0$. If $f(z)\leq e^{-\theta z}$, $z\geq 1$ 
%	then the conclusions of Theorem \ref{thm:queue} hold.
%\end{theorem}

	\begin{theorem}\label{thm:queue2} 
Let $(S_{n})_{n\in\mathbb{N}}$ be an i.i.d.\ sequence and let $(Z_{n})_{n\in\mathbb{N}}$ be ergodic,
	satisfying a G\"artner-Ellis type condition and 
	$P(Z_{0}\geq z)\leq C_{1}\exp\left(-C_{2}e^{C_{3}z}\right)${}
	with some $C_{1},C_{2},C_{3}>0$. 
	Let us assume that $\E[e^{\beta_{0} S_{0}}]<\infty$ for some $\beta_{0}>0$ and
	the law of $S_{0}$ has
	a nonincreasing density $s\to f(s)$ such that $f(s)\geq C_{4}e^{-C_{5}s}$, $s\geq 0$ for some $C_{4},C_{5}>0$.
Then there exists a probability $\mu_{*}$ on $\mathcal{B}(\R_{+})$ such that 
		\begin{equation*}
		\dtv(\law{W_n},\mu_{*})\leq c_1\exp\left(-c_2 n^{c_{3}}\right),
		\end{equation*}
holds for some $c_1,c_2,c_{3}>0$. 
Furthermore, for an arbitrary 
measurable and bounded $\Phi:\R_{+}\to\R$,{}
		\eqref{torta} holds in probability.
	\end{theorem}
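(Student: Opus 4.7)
The plan is to cast the waiting-time dynamics as a Markov chain in the random environment formalism of \cite{attila}: since $(S_n)_{n\in\N}$ is i.i.d.\ and independent of the stationary driver $(Z_n)_{n\in\Z}$, the conditional law of $W_{n+1}$ given the past and the environment depends only on $W_n$ and $Z_{n+1}$. Following the blueprint of Theorem \ref{thm:queue1}, I would verify a random Foster--Lyapunov drift condition together with a random minorization condition and then invoke the abstract ergodic theorem of \cite{attila} to read off the rate and the LLN. The qualitative novelty, as highlighted in Remark \ref{bom}, is that $Z_0$ is now unbounded, so neither the drift nor the minorization lower bound can be taken uniform in the environment; both have to be truncated at a level that grows slowly with $n$.

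For the drift, I would use $V(w)=e^{\gamma w}$ with $\gamma\in(0,\beta_0\wedge\eta)$ small. The Lindley recursion gives $V(W_{n+1})\leq 1+V(W_n)e^{\gamma S_n}e^{-\gamma Z_{n+1}}$, so iterating over a block of length $m$ and using independence of $S$ and $Z$,
\begin{equation*}
\E\!\left[e^{\gamma\sum_{k=1}^{m}(S_k-Z_{k+1})}\right]=\bigl(\E[e^{\gamma S_0}]\bigr)^{m}\,\E\!\left[e^{-\gamma\sum_{k=1}^{m}Z_{k+1}}\right]=\exp\!\bigl(m(\Gamma_S(\gamma)+\Gamma_Z(-\gamma)+o(1))\bigr),
\end{equation*}
where $\Gamma_S(\gamma):=\ln\E[e^{\gamma S_0}]$ and $\Gamma_Z$ is supplied by the G\"artner--Ellis assumption. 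Both cumulants are differentiable at $0$ with slopes $\E[S_0]$ and $-\E[Z_0]$, so $\Gamma_S(\gamma)+\Gamma_Z(-\gamma)=\gamma(\E[S_0]-\E[Z_0])+o(\gamma)<0$ for all small $\gamma>0$. Fixing such a $\gamma$ yields the average contractivity needed by the abstract framework.

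For the minorization I would work with the small set $[0,R]$ and target interval $[R,R+1]$. Given $W_n=w\in[0,R]$ and $Z_{n+1}=z\leq M$, the conditional density of $W_{n+1}$ at $y\in[R,R+1]$ equals $f(y+z-w)$, and since $f(s)\geq C_4 e^{-C_5 s}$ with $y+z-w\leq R+1+z$,
\begin{equation*}
f(y+z-w)\geq C_4\,e^{-C_5(R+1+M)}.
\end{equation*}
Thus, on the good event $\{Z_{n+1}\leq M\}$ the kernel dominates $C_4 e^{-C_5(R+1+M)}$ times Lebesgue measure on $[R,R+1]$, yielding a minorization constant of order $e^{-O(M)}$; the bad event has probability at most $C_1\exp(-C_2 e^{C_3 M})$, a doubly-exponential decay in $M$ which therefore allows $M$ to be taken as large as $O(\log\log n)$ while its aggregated tail contribution to $\dtv(\law{W_n},\mu_\ast)$ remains negligible.

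The hard part will be balancing these two ingredients to extract a concrete stretched-exponential rate. The framework of \cite{attila} demands matching moment estimates on the random Lyapunov constants and on the return times to the small set; with the environment truncated at $M_n$ growing like $\log\log n$, the minorization constant deteriorates like $e^{-CM_n}$ while the drift contributes exponential decay in $n$, and the optimal trade-off produces a rate $\exp(-c_2 n^{c_3})$ with some $c_3\in(0,1)$. I expect $c_3$ to come out strictly smaller than the $1/3$ of Theorem \ref{thm:queue1}, reflecting the extra price of truncating an unbounded environment. Once the abstract theorem applies, the existence of $\mu_\ast$, the total variation estimate, and the law of large numbers \eqref{torta} follow as in Theorem \ref{thm:queue1}.
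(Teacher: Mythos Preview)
Your overall architecture---view $W$ as a Markov chain in the random environment $Z$, verify a random Foster--Lyapunov drift via the G\"artner--Ellis condition, establish a minorization on $[0,R]$ with target mass on $[R,R+1]$, then invoke the abstract ergodic theorem of \cite{attila}---matches the paper's proof exactly. Your drift computation and your pointwise minorization bound $f(y+z-w)\geq C_4 e^{-C_5(R+1+z)}$ are both correct and coincide with the paper's Lemmas \ref{lem:queuing:DriftAndLT} and \ref{lem:queuing:minor}.

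The difference is in how the unbounded environment is absorbed. The paper does \emph{not} truncate the kernel: it takes the environment-dependent minorization constant $1-\alpha(z)=C_4 e^{-C_5(z+h+1)}$ at face value and then verifies Assumption \ref{as:myas}, i.e.\ $\E^{1/n^{\theta}}[\alpha(Z_0)^n]\to 0$, by splitting the integral $\E[\alpha(Z_0)^n]$ at a threshold $H\ln n$ and using the Gumbel-type tail on $Z_0$. This is precisely where the double-exponential hypothesis bites, and it delivers the stretched-exponential rate through the abstract bound \eqref{zoroaster}.

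Your truncation scheme is morally the same idea, but the level you propose is off by one logarithm. With $M_n=c\log\log n$ the bad-event contribution over $n$ steps is of order $n\exp(-C_2(\log n)^{C_3 c})$, which decays only like $\exp\!\bigl(-(\log n)^{C_3 c}\bigr)$: super-polynomial, but \emph{not} of the form $\exp(-c_2 n^{c_3})$. To get a genuine stretched exponential you need $M_n=H\log n$ with $H<1/C_5$; then the good-event failure probability is $(1-n^{-C_5 H})^{\,\asymp n}\leq\exp(-c\,n^{1-C_5 H})$ and the bad-event term is $\exp(-C_2 n^{C_3 H})$, both of the required shape. Once you fix the threshold, your argument and the paper's verification of Assumption \ref{as:myas} are essentially the same computation.
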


Theorems \ref{thm:queue1} and \ref{thm:queue2} open the door for the statistical
analysis of such queuing systems. For example, choosing $\Phi(w):=1_{\{w\geq w_{0}\}}$, $w\in\mathbb{R}_{+}$
for some $w_{0}>0$, the above results guarantee that we can consistently estimate the probability of 
the waiting time exceeding $w_{0}$
in the stationary state by $(\Phi(W_1)+\ldots+\Phi(W_{n}))/{n}$ for large $n$.

In Section \ref{recall} below we will recall the notion of Markov chains in random environments and
certain results of \cite{attila} which we utilize. 
Sections \ref{p1} and \ref{p2} contain the proofs of
our two new theorems.

\begin{remark}{\rm The arguments of \cite{attila} also provide rate estimates
in the law of large numbers for functionals of the waiting time. Roughly speaking, if the 
process $Z$ has favourable enough mixing properties (such that its functionals satisfy the law of large
numbers with the usual $N^{-1/2}$ rate in $L^{p}$-norms)
then an estimate of $O(N^{-1/6})$ for the $L^{p}$-norms of the functional averages in \eqref{torta} can be obtained.
As it is not easy to provide a clear-cut set of conditions for suitable $Z$, we refrain from these ramifications here.}
\end{remark}
	
\section{Markov chains in random environments}\label{recall}

First we review the abstract setting of \cite{attila}.	
Let $\Y$, $\X$ be two Polish spaces  and $Y:\Z\times\Omega\to\Y$ a strongly stationary $\Y$-valued 
	stochastic process. 
	Let $Q:\Y\times\X\times\B(\X)\to [0,1]$ 
	be a mapping such that 
	\begin{enumerate}
		
		\item for all $B\in\B(\X)$ the function 
		$(y,x)\mapsto Q(y,x,B)$ is $\B(\Y)\otimes\B(\X)$-measurable and
		
		\item for all $(y,x)\in\Y\times\X$, 
		$B\mapsto Q(y,x,B)$ is a  probability measure on $\B(\X)$.
	
	\end{enumerate}
	We will consider $\X$-valued process $X_t$, $t\in\N$ such that $X_0=x_0\in\X$
	is fixed and for $t\in\N$,
	\begin{equation*}
		\P (X_{t+1}\in B\mid\sigma (X_s,\, 0\le s\le t;\, Y_s,\, s\in\mathbb{Z}))=Q(Y_t,X_t,B)\,\,\P-\Pas.
	\end{equation*}
	We interpret the process $X$ as a Markov chain in a random environment described by the process $Y$.
	
	\begin{definition}\label{def:act}
	Let $R:\X\times\B(\X)\to [0,1]$ be a probabilistic kernel. For
	a measurable function $\phi:\X\to\R_{+}$, we define
	\begin{equation*}
	[R\phi](x)=\int_\X \phi(z) R(x,\dint z),\,x\in\X.
	\end{equation*}
	\end{definition}
	Consistently with Definition \ref{def:act}, for $y\in\Y$, 
	$Q(y)\phi$ will refer to the action of the kernel $Q(y,\cdot,\cdot)$ on $\phi$.
	First, a Foster-Lyapunov-type drift condition is formulated.

	\begin{assumption}\label{as:drift}
		Let $V:\X\to\R_{+}$ be a measurable function.
	We consider measurable functions $K,\gamma:\mathcal{Y}\to \mathbb{R}_{+}$ with $K(\cdot)\geq 1$.
	%and with $\gamma(\cdot)\geq a$ for some $a>0$. 
	We assume that, for all $x\in\X$ and $y\in\Y$,
	\begin{equation*}
	[Q(y)V](x)\le \ga (y)V(x)+K(y).
	\end{equation*}
	\end{assumption}

If $\ga,K$ are independent of $y$ and $\ga<1$ then this is the standard drift condition
for geometrically ergodic Markov chains, see Chapter 15 of \cite{mt}.
Here $\ga(y)\geq 1$ may well occur, but in the next assumption we stipulate that the system
dynamics, on long-time average, should be contracting.
	
	\begin{assumption}\label{as:LT}
		We require
		\begin{equation*}
			\bar{\ga}:=\limsup_{n\to\infty}\E^{1/n}\left(K(Y_0)\prod_{k=1}^{n}\ga (Y_k)\right) < 1.
		\end{equation*}
	\end{assumption}

We continue with familiar minorization conditions of
Markov chain theory, see Chapter 5 of \cite{mt}. In the jargon of that theory, we require
that there exist large enough ``small sets''.  

	\begin{assumption}\label{as:minor}
	Let $\ga (\cdot)$, $K(\cdot)$ be as in Assumption \ref{as:drift}.
	We assume that for some $0<\eps<1/\bar{\ga}^{1/2}-1$, there is a measurable function $\alpha:\Y\to (0,1)$ 
	and a probability kernel $\ka: \Y\times\B\to [0,1]$ such that, for all $y\in\Y$ and $A\in\B$,
	\begin{equation}\label{maudit}
		\inf_{x\in V^{-1}([0,R(y)])} Q(y,x,A)\ge (1-\alpha (y)) 
		\ka (y,A), \text{ where } R(y)=\frac{2K(y)}{\eps\ga (y)}	
	\end{equation}
	and $V^{-1}([0,R(y)])\neq\emptyset$.
	\end{assumption}
	
Finally, we also need to control the probability of $\alpha(Y_{0})$ approaching $1$. 
\begin{assumption}\label{as:myas}
$\lim_{n\to\infty} \E^{1/n^{\theta}}\left[\alpha (Y_0)^n\right] = 0$
holds for some $0<\theta<1$.
\end{assumption}
	
	% Presenting the results
We now recall certain results of \cite{attila}: with the above presented assumptions, 
	the law of $X_n$ converges to a limiting law as $n\to\infty$, moreover, bounded functionals of
	the process $X$ show ergodic behavior provided that $Y$ is ergodic.
	\begin{theorem}\label{thm:TV}
		 Under Assumptions \ref{as:drift}, \ref{as:LT}, \ref{as:minor} and \ref{as:myas}, 
		 there exists a probability law $\mu_{\ast}$ such that 
		 \begin{align}
		 	\dtv (\law{X_{n}},\mu_{\ast}) \le C_{0}\left(e^{-\nu_{1}n^{2/3}}+\sum_{k=n}^{\infty}
		 		 	 \E^{\frac{\nu_{2}}{k^{\kappa\theta}}}[\alpha^{k^{\kappa}-1}(Y_0)]
		 	\right)\label{zoroaster}
		 \end{align}
	holds for all $n\in\N$, with some $C_{0},\nu_{1},\nu_{2}>0$, where $\kappa:=\frac{1}{3(1-\theta)}$.
If $Y$ is \emph{ergodic}, 
then for any bounded and measurable $\Phi:\X\to\R$ 
\begin{equation}\label{tortai}
\frac{\Phi (X_1)+\ldots + \Phi (X_n)}{n} \to \int_\X \Phi (z)\,\mu_{\ast}(\dint z), \,n\to\infty
\end{equation}
holds in probability.
	\end{theorem}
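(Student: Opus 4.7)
The natural approach is to couple $(X_n)_{n\in\N}$, started from $x_0$, with an independent copy $(X_n')_{n\in\N}$ started from a candidate invariant law, both driven by the same environment $Y$. First I would iterate Assumption \ref{as:drift} to obtain, almost surely,
\[
\E[V(X_n)\mid\sigma(Y)] \le V(x_0)\prod_{k=0}^{n-1}\ga(Y_k) + \sum_{j=0}^{n-1} K(Y_j)\prod_{k=j+1}^{n-1}\ga(Y_k),
\]
and combine this with Assumption \ref{as:LT} and Markov's inequality to show that, outside an explicit small-probability event, $V(X_n)\le R(Y_n)$, so that the minorization of Assumption \ref{as:minor} becomes applicable at time $n$.

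Next I would carry out a Doeblin-style coupling in blocks of length $\ell=\ell(n)$. At each block boundary, whenever both $X_\cdot$ and $X_\cdot'$ lie in the small set $V^{-1}([0,R(Y_\cdot)])$, I draw an independent Bernoulli$(1-\alpha(Y_\cdot))$ variable; on success, both chains are redrawn from the common minorizing kernel $\ka(Y_\cdot,\cdot)$, which produces coalescence, and on failure they proceed with the residual transition kernels. The coupling inequality then gives
\[
\dtv(\law{X_n},\law{X_n'}) \le \P(\text{no successful coupling by time }n),
\]
and the right-hand side decomposes into two contributions: (i) the event that at some block-end one of the chains has escaped the small set, controlled by an exponential Chebyshev estimate on $V$ via Assumption \ref{as:LT}; and (ii) the event that every coupling attempt has failed, controlled, after conditioning on the environment, by a product whose expectation is estimated using Assumption \ref{as:myas}. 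Balancing (i) and (ii) with $\ell \asymp n^{\kappa}$, $\kappa=\frac{1}{3(1-\theta)}$, produces exactly the two summands on the right of \eqref{zoroaster}: the drift/small-set optimization yields the $e^{-\nu_1 n^{2/3}}$ term, while the residual tail of coupling failures yields the sum $\sum_{k\ge n}\E^{\nu_2/k^{\kappa\theta}}[\alpha^{k^\kappa-1}(Y_0)]$.

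The principal obstacle is the genuinely random character of $\ga$, $K$ and $\alpha$, which rules out any pathwise contraction: $\ga(Y_k)>1$ is permitted and $1-\alpha(Y_k)$ can be arbitrarily small over long stretches of environment. One therefore has to replace the deterministic Meyn--Tweedie estimates by exponential-moment bounds on $\prod_{k=1}^{n}\ga(Y_k)$ and on $\alpha(Y_0)^m$, which is exactly what Assumptions \ref{as:LT} and \ref{as:myas} provide; the two rates encoded in $\kappa$ reflect the Young-type trade-off between these estimates. Existence of $\mu_\ast$ then follows by a Cauchy argument on $\law{X_n}$ in total variation. For the ergodic theorem \eqref{tortai}, ergodicity of $Y$ lifts to ergodicity of the stationary skew-product chain $((X_n^\ast,Y_n))_{n\in\Z}$, so Birkhoff's theorem gives almost sure convergence of $(1/n)\sum_k\Phi(X_k^\ast)$ to $\int\Phi\,\dint\mu_\ast$; the total variation bound transfers this limit to the chain started at $x_0$ as convergence in probability.
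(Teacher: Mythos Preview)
Your proposal is correct and follows essentially the same route as the paper: a coupling of two copies in the same environment, a block decomposition of $[0,n]$, and a trade-off between the drift estimate (Assumptions~\ref{as:drift}--\ref{as:LT}) controlling excursions from the small set and the minorization (Assumptions~\ref{as:minor}--\ref{as:myas}) controlling failed coupling attempts, followed by a Cauchy argument for $\mu_\ast$. The paper frames the dichotomy slightly differently---as a pigeonhole on the number of returns to the small set (either $\gtrsim n^{1/3}$ returns, or a whole block of length $n^{2/3}$ without returns)---and simply refers to \cite{attila} for the law of large numbers (``a more refined construction in a similar vein'') rather than invoking a skew-product and Birkhoff, but the substance is the same.
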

\begin{proof}
This follows from Theorems 2.8, 2.10 and Lemma 7.7 of \cite{attila}. We give a brief sketch of the main ideas nonetheless.
For simplicity, let $\alpha$ be constant. Let us start the process from two different initializations $X_{0},X_{0}'$.
The time interval $0,\ldots,T$ is cut into disjoint pieces of size $T^{2/3}$ (there are $T^{1/3}$
such pieces). Either the ``small sets'' of 
Assumption \ref{as:minor} are visited at least $T^{1/3}$ times or there is a whole piece of size $T^{2/3}$
without visits. In the first case Assumption \ref{as:minor} guarantees couplings of the two trajectories
outside a set of probability of the order $e^{-T^{1/3}}$ (since a minorization with constant $\alpha${}
can be used at each return to get couplings). In the other case the contractive effect of 
Assumption \ref{as:LT} acts for at least $T^{2/3}$ steps. As no returns to the small set occur during this
interval, the latter
probability is also small. An extension of this argument guarantees the Cauchy property for the
sequence $\mathcal{L}(X_{n})$ in total variation norm, which proves convergence, even with a rate.
The law of large numbers follows by a more refined construction in a similar vein.
\end{proof}

Our argument below consist in verifying that the queuing system in consideration satisfies	
the assumptions of Theorem \ref{thm:TV}.

\section{Proof in the unbounded case}\label{p1}

Throughout this section the assumptions of Theorem \ref{thm:queue2} are
in force. We will use results of the previous section in the setting $\mathcal{X}=\mathcal{Y}=\mathbb{R}_{+}$;
$Y_{n}:=Z_{n}$, $n\in\mathbb{N}$, i.e.\ the inter-arrival times will constitute the random environment
in which the waiting time evolves. We can easily extend the process $Y$ on the negative time axis in such a way that
$Y_{n}$, $n\in\mathbb{Z}$ is stationary. 
Define the parametrized kernel $Q$ as follows:
	\begin{equation*}
	Q(z,w,A):=\P \left[\left(w+S_{0}-z\right)_+\in A\right],\,
	z\in\mathcal{Y},\ w\in\mathcal{X},\,A\in\B\left(\mathcal{X}\right).
	\end{equation*}

%The next lemma is a slight strengthening of Lemma x.x in \cite{attila}.

%\begin{lemma}
%There exists $0<\bar{\beta}<\beta$ such that $E[e^{\bar{\beta}(S_{0}-Z_{0})}]<1$.
%\end{lemma}	
%\begin{proof}
%Let $0<\beta_{n}\leq \beta$ converge to $0$ as $n\to\infty$. By the Lagrange mean value theorem and
%by measurable selection there is a random variable $\xi_{n}\in [0,\beta_{n}]$ such that
%$$
%e^{{\beta}_{n}(S_{0}-Z_{0})}-1=\beta_{n}(S_{0}-Z_{0})e^{\xi_{n}(S_{0}-Z_{0})} 	
%$$
%and $e^{{\beta}_{n}(S_{0}-Z_{0})}\leq e^{\beta S_{0}}$ hence Lebesgue's theorem implies that
%the right-hand derivative
%$$
%\partial^{+}_{\beta}E[e^{{\beta}(S_{0}-Z_{0})}]\vert_{\beta=0}=E[S_{0}-Z_{0}]<0.
%$$
%If we had $E[e^{{\beta}_{n}(S_{0}-Z_{0})}]\geq 1$ for each $n$ then we would get
%$\partial_{\beta }^{+}E[e^{{\beta}_{n}(S_{0}-Z_{0})}]\geq 0$, a contradiction implying that $\bar{\beta}=\beta_{n}$
%satisfies the requirement of this lemma, for $n$ large enough.
%\end{proof}	
	
%\section{Proof of Theorem \ref{thm:queue2}}\label{p2}

%For a measurable $\phi:\mathcal{X}\to\mathbb{R}_{+}$ we define the action of the
%kernel on $\phi$ as follows:
%$$
%[Q(z)\phi](x):=\int_{\mathbb{R}_{+}}\phi(u)Q(z,x,du).	
%$$

We now turn to the verification of Assumptions \ref{as:drift} and \ref{as:LT}.
	
	\begin{lemma}\label{lem:queuing:DriftAndLT}	For some 
	$\bar{\beta}>0$ define
	\begin{align*}
	V(w) &:= e^{\bar{\beta}w}-1,\, w\ge 0,\\
	K(z) &:= \ga (z) :=e^{-\bar{\beta}z}\E\left[e^{\bar{\beta}S_{0}}\right],\,z\ge 0.
%	K       &:= \E\left[e^{\bar{\alpha}S_{0}}\right].
	\end{align*} 
	We have
	\begin{equation}\label{driff}
	[Q(z)V](w)\le \ga(z)V(w)+K(z)
	\end{equation}
	for all $z\in\mathcal{Y}$, $w\in\mathcal{X}$.
	Furthermore, choosing $\bar{\beta}$ small enough,
	\begin{equation}\label{control}
	\bar{\ga}:=\limsup_{n\to\infty}\E^{1/n}\left(K(Z_{0})\prod_{k=1}^{n}\ga (Z_k)\right) < 1.
	\end{equation}
	\end{lemma}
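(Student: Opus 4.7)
The plan is to verify both claims directly from the definitions, leveraging the independence of $S_{0}$ from the environment $(Z_{n})$ and the G\"artner--Ellis behaviour of $(Z_{n})$ near the origin.

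For \eqref{driff}, the first observation is the pointwise inequality $e^{\bar{\beta}x_{+}}\le e^{\bar{\beta}x}+1$ for every $x\in\mathbb{R}$ (trivially when $x\ge 0$, and because $1\le 1+e^{\bar{\beta}x}$ when $x<0$). Applied to $x=w+S_{0}-z$ under the expectation this gives
\[
[Q(z)V](w)=\E\!\left[e^{\bar{\beta}(w+S_{0}-z)_{+}}\right]-1\le \E\!\left[e^{\bar{\beta}(w+S_{0}-z)}\right]=e^{\bar{\beta}w}e^{-\bar{\beta}z}\E\!\left[e^{\bar{\beta}S_{0}}\right]=\ga(z)V(w)+K(z),
\]
which is precisely \eqref{driff}. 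The only requirement at this stage is $\bar{\beta}<\beta_{0}$, so that $\E[e^{\bar{\beta}S_{0}}]$ is finite.

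For \eqref{control}, using $K=\ga$ together with the independence of $S_{0}$ from $(Z_{n})$ stipulated by Assumption \ref{basic}, a direct computation gives
\[
\E\!\left[K(Z_{0})\prod_{k=1}^{n}\ga(Z_{k})\right]=\E\!\left[e^{\bar{\beta}S_{0}}\right]^{n+1}\E\!\left[\exp\!\left(-\bar{\beta}\sum_{k=0}^{n}Z_{k}\right)\right].
\]
Since $Z_{0}\ge 0$, the factor $e^{-\bar{\beta}Z_{0}}\le 1$ may simply be dropped, and the G\"artner--Ellis condition applied at $\alpha=-\bar{\beta}$ (valid as soon as $\bar{\beta}<\eta$) yields
\[
\ln\bar{\ga}\le \ln\E\!\left[e^{\bar{\beta}S_{0}}\right]+\Gamma(-\bar{\beta}).
\]

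The final step is to expand this bound to first order in $\bar{\beta}$. The log-Laplace transforms $\Lambda_{n}(\alpha):=\frac{1}{n}\ln\E[e^{\alpha(Z_{1}+\ldots+Z_{n})}]$ are convex on $(-\eta,\eta)$ with $\Lambda_{n}'(0)=\E[Z_{0}]$ by stationarity (derivative and expectation commute in a neighbourhood of the origin thanks to the doubly-exponential tail bound on $Z_{0}$), and they converge pointwise to the differentiable function $\Gamma$. The standard fact that pointwise convergence of convex functions forces convergence of derivatives at every point of differentiability of the limit then delivers $\Gamma'(0)=\E[Z_{0}]$. Combined with $(\ln\E[e^{\bar{\beta}S_{0}}])'|_{\bar{\beta}=0}=\E[S_{0}]$, the estimate above becomes
\[
\ln\bar{\ga}\le \bar{\beta}\bigl(\E[S_{0}]-\E[Z_{0}]\bigr)+o(\bar{\beta})\quad\text{as}\ \bar{\beta}\downarrow 0,
\]
which is strictly negative for all sufficiently small $\bar{\beta}>0$ by the subcriticality in Assumption \ref{basic}, so $\bar{\ga}<1$. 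The identification $\Gamma'(0)=\E[Z_{0}]$ via convex convergence is the one step that calls for some care; everything else is essentially a direct calculation.
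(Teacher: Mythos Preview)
Your proof is correct and follows essentially the same route as the paper: the drift inequality is handled identically, and for \eqref{control} both arguments rely on Rockafellar's theorem on derivatives of pointwise-convergent convex functions to identify the derivative of the limiting log-moment-generating function at $0$ and thereby force the bound below $1$ for small $\bar{\beta}$. The only cosmetic difference is that the paper works with the combined quantity $\lambda_n(\beta)=\frac{1}{n}\ln\E[e^{\beta\sum_{j=1}^n(S_j-Z_j)}]$ and establishes $\lambda_n'(0)\le\E[S_0-Z_0]$ via a reverse Fatou argument, whereas you separate the $S$ and $Z$ contributions from the outset and compute $\Gamma'(0)=\E[Z_0]$ directly before expanding to first order.
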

	\begin{proof}
	Let us estimate		
	\begin{eqnarray*}
	[Q(z)V](w) &=& \E[e^{\bar{\beta}(w+S_{0}-z)_{+}}]-1
				\le
				\E[e^{\bar{\beta}(w+S_{0}-z)}]\\
				&=& \ga(z)e^{\bar{\beta}w}
				= \ga(z)V(w)+ \ga(z),
%				&\leq& \ga(z)V(w)+ \E\left[e^{\bar{\alpha}S_{0}}\right],
	\end{eqnarray*}
			so \eqref{driff} holds.
Define 
\begin{equation*}
\la(\beta):=\Gamma(-\beta)+\ln(\E[e^{\beta S_{0}}]),\,\, \beta\in (-\bar{\eta},\bar{\eta}),
\end{equation*}
where $\bar{\eta}:=\min\{\eta,\beta_{0}/2\}$. Note that $Z_{0}$ has finite exponential moments of all
orders. Hence the functions 
			\begin{equation*}
			\la_n(\beta):=\frac{1}{n}\ln \E\left[e^{\beta\sum_{j=1}^n(S_{j}-Z_j)}\right],\ \beta\in (-\bar{\eta},
			\bar{\eta}),\ 
			n\geq 1	
			\end{equation*}
			are finite. They are also clearly convex. 
			Define 
			\begin{equation*}
				\psi_n(\beta):= \E\left[\frac{e^{\beta\sum_{j=1}^n(S_{j}-Z_j)}-1}{\beta}\right],\,\,
				\beta\in (0,\bar{\eta}),\,
				n\geq 1.
			\end{equation*}
			
			By the Lagrange mean value theorem and measurable selection, there exists a random variable 
			$\xi_n(\beta)\in [0,\beta]$ such that
			\begin{equation*}
			\psi_n(\beta)= \E\left[\left(\sum_{j=1}^n(S_{j}-Z_j)\right){}
			e^{\xi_n(\beta)\sum_{j=1}^n(S_{j}-Z_j)}\right].
			\end{equation*}
			Here 
			\begin{equation*}
			\left(\sum_{j=1}^n (S_{j}-Z_j)\right)e^{\xi_n(\beta)\sum_{j=1}^n(S_{j}-Z_j)}\leq \left(
			\sum_{j=1}^n S_{j}\right) e^{\bar{\eta} \sum_{j=1}^n S_{j}}
			\end{equation*}
			which is integrable. Hence reverse Fatou's lemma shows that
			\begin{equation*}
			\limsup_{\beta\to 0+}\psi_n(\beta)\leq \E\left[\sum_{j=1}^n (S_{j}-Z_j)\right]=n 
			\E\left[S_0-Z_0\right].
			\end{equation*}
			This implies that, for all $n\geq 1$, 
			$\la_n'(0)=\frac{1}{n}\lim_{\beta\to 0+}\psi_n(\beta)\leq{\E\left[S_0-Z_0\right]}<0$.
			
			Since $\la_n(\beta)\to\la(\beta)$ for $\beta\in (-\bar{\eta},\bar{\eta})$ by 
			the G\"artner-Ellis-type property of $(Z_{n})_{n\in\mathbb{N}}$,
			it follows from Theorem 25.7
			of \cite{rockafellar} that also $\la_n'(0)\to\la'(0)$ hence $\la'(0)<0$. 
			By Corollary 25.5.1 of \cite{rockafellar}, differentiability of $\la$ implies its
			\emph{continuous} differentiability, too. Hence
			from $\la(0)=0$ and $\la'(0)<0$ we obtain that there exists $\bar\beta>0$ satisfying 
			\begin{equation}\label{matra}
			\lim_{n\to\infty}\frac{1}{n}\ln \E e^{\bar{\beta}(S_1+\ldots+S_{n})-\bar{\beta}(Z_1+\ldots+Z_n)}<0.
			\end{equation}
			By \eqref{matra}, the long-time contractvity condition holds:
			\begin{equation}\label{juj}
			\limsup_{n\to\infty} \E^{1/n}[K(Z_{0})\ga(Z_1)\ldots\ga(Z_n)]<1
			\end{equation}
			since $K(z)\leq \E[e^{\beta_{0}S_{0}}]<\infty$ for all $z\in\mathcal{Y}$.	
	\end{proof}
	
Choose $\varepsilon:=(1/\bar{\ga}^{1/2}-1)/2$. 	
Notice that $R:=R(z):=2K(z)/(\varepsilon\gamma(z))=2/\varepsilon$ does not depend on $z$.
Now let us turn to the verification of the minorization condition.
Let $h:=(1/\bar{\beta})\ln\left(2/\varepsilon+1\right)$.
	
\begin{lemma}\label{lem:queuing:minor}  
		For $z\in\mathcal{Y}$, $A\in\B\left(\mathcal{X}\right)$ define 
		$\kappa(A):=\kappa(z,A):=\mathrm{Leb}(A\cap [h,h+1])$, where 
		$\mathrm{Leb}$ denotes the Lebesgue measure.
		Then 
		\begin{equation*}
		 \inf_{w\in V^{-1}([0,R])} Q(z,w,A)\ge (1-\alpha(z)) \kappa(A),	
		\end{equation*}
holds for $$
\alpha(z):=1-C_{4}e^{-C_{5}(z+h+1)}.
$$
\end{lemma}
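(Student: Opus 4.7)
The plan is to directly exploit the explicit form of $V$ and the pointwise lower bound on $f$. First I would identify the level set. Since $V(w)=e^{\bar\beta w}-1$ is strictly increasing in $w$, the inequality $V(w)\le R$ is equivalent to $w\le (1/\bar\beta)\ln(R+1)=h$, so $V^{-1}([0,R])=[0,h]$; in particular this set is nonempty. Because the kernel $\kappa(z,\cdot)$ is supported on $[h,h+1]$, it suffices to prove the lower bound for Borel sets $A\subset[h,h+1]$ and then restrict an arbitrary $A$ to $A\cap[h,h+1]$.

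Next I would translate the defining event. Fix $w\in[0,h]$, $z\ge 0$, and $A\subset[h,h+1]$. Since every element of $A$ is at least $h\ge w$, the map $s\mapsto (w+s-z)_+$ sends $\{s:\,s\ge 0,\ w+s-z\in A\}$ bijectively onto $\{(w+s-z)_+\in A\}$, giving
\begin{equation*}
Q(z,w,A)=\P[(w+S_0-z)_+\in A]\ge \P[S_0\in A-w+z].
\end{equation*}
The translated set $A-w+z$ sits in $[h-w+z,h+1-w+z]$, which is a subset of $[0,z+h+1]$ because $w\le h$.

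Now I would apply the density hypotheses. Writing the probability as an integral of $f$ over $A-w+z$ and using that $f$ is nonincreasing together with $f(s)\ge C_4 e^{-C_5 s}$, every integrand satisfies
\begin{equation*}
f(s)\ge f(z+h+1)\ge C_4 e^{-C_5(z+h+1)},
\end{equation*}
since $s\le z+h+1$. Integrating and noting that translation preserves Lebesgue measure yields
\begin{equation*}
Q(z,w,A)\ge C_4 e^{-C_5(z+h+1)}\,\mathrm{Leb}(A)=\bigl(1-\alpha(z)\bigr)\,\kappa(A),
\end{equation*}
which is the required minorization after extending to a general $A$ by replacing $A$ with $A\cap[h,h+1]$. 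The argument is elementary once $V^{-1}([0,R])$ is identified; the only point that requires a little care is ensuring $A-w+z\subset\mathbb{R}_+$ so that one may drop the positive part and directly use the density of $S_0$, which is exactly why the supporting interval of $\kappa$ is taken to start at $h$ rather than at $0$.
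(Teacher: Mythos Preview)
Your proposal is correct and follows essentially the same route as the paper's proof: identify $V^{-1}([0,R])=[0,h]$, reduce to $A\subset[h,h+1]$, drop the positive part, translate to $S_0\in A+z-w$, bound the integrand below by $f(z+h+1)\ge C_4 e^{-C_5(z+h+1)}$ using monotonicity of $f$, and invoke translation invariance of Lebesgue measure. Your write-up is more explicit (in particular about why $A+z-w\subset[0,z+h+1]$), but the argument is the same.
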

\begin{proof} 
Notice that $V^{-1}([0,R])=[0,h]$.{}
For each $z\in\mathcal{Y}$, $A\in\mathcal{B}(\mathcal{X})$, $A\subset [h,h+1]$, and $w\in [0,h]$, 
\begin{align*}
		Q(z,w,A) &= \P \left(\left[w+S_{0}-z\right]_+\in A \right) \\
		&\ge \P \left(w+S_{0}-z\in A\right)           \\
	    &\ge \P\left(S_{0}\in (A+z-w)\right)   \\
	    &\ge f(z+h+1)\kappa(A)               \\
	    &\ge C_{4}e^{-C_{5}(z+h+1)}\kappa(A),
\end{align*}	
by translation invariance of the Lebesgue measure. We may conclude.
%and by the elementary inequality 
%\begin{equation*}
%	1-x\ge\frac{\sqrt{e}}{2}e^{-x},\,\, x\in [0,1/2],
%\end{equation*}
%noting also that $f(z+h+1)\leq 1$. 
\end{proof}

\begin{proof}[Proof of Theorem \ref{thm:queue2}.]
Let $\mu_{0}$ denote the law of $Z_{0}$. Let $H>0$ be a constant to be chosen later.
From Lemma \ref{lem:queuing:minor},
\begin{align*}
	E[\alpha^{n}(Z_{0})] \leq \int_{0}^{\infty} \alpha^{n}(z)\mu_{0}(dz)   \le&\\
	\int_{0}^{H\ln(n)} (1- C_{4}e^{-C_{5}(z+h+1)})^{n}\mu_{0}(dz) + P(Z_{0}\geq H\ln(n))     \le&\\
	H\ln(n) e^{-nC_{4}e^{-C_{5}[H\ln(n)+h+1]}}+C_{1}\exp\left(-C_{2}e^{C_{3}H\ln(n)}\right) \le&\\ \label{astra} 
	H\ln(n)\exp\left\{-C_{6}n^{1-C_{5}H}\right\}+C_{1}\exp\left\{-C_{2}n^{C_{3}H}\right\}.
\end{align*}
with some constant $C_{6}$.
Choosing $H$ so small that $HC_{5}<1$ we get that Assumption \ref{as:myas} holds for $\theta$ small
enough. We can now invoke Theorem \ref{thm:TV}. The claimed
convergence rate also follows from \eqref{zoroaster}.
\end{proof}

\section{Proof in the bounded case}\label{p2}

Let the assumptions of Theorem \ref{thm:queue1} be in force.
Notice that Lemma \ref{lem:queuing:DriftAndLT} applies verbatim in this case, too.

\begin{lemma}\label{lem:queuing:minor1}  
	For $z\in\mathcal{Y}$, $A\in\B\left(\mathcal{X}\right)$ define 
	$\kappa(A):=\kappa(z,A):=\mathrm{Leb}(A\cap [h,h+1])$.
	Then $\inf_{w\in V^{-1}([0,R])} Q(z,w,A)\ge (1-\alpha) \kappa(A)$,
	holds for a constant $\alpha>0$.
\end{lemma}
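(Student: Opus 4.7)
The strategy is to mimic the proof of Lemma \ref{lem:queuing:minor}, now exploiting boundedness of $(Z_n)_{n\in\mathbb{N}}$ to replace the $z$-dependent prefactor $C_4 e^{-C_5(z+h+1)}$ by a single constant. Let $M>0$ be an essential upper bound for $Z_0$, so we may take the environment space to be $\mathcal{Y}=[0,M]$.

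First I would note that the drift/contractivity part carries over without change: Lemma \ref{lem:queuing:DriftAndLT} uses only the G\"artner-Ellis-type condition and the assumption $\E[e^{\beta_0 S_0}]<\infty$, both of which are present here, so the same choice of $V$, $\gamma$, $K$ yields the same constants $\bar{\gamma}$, $\varepsilon$, $R=2/\varepsilon$, and $h=(1/\bar{\beta})\ln(2/\varepsilon+1)$. In particular $V^{-1}([0,R])=[0,h]$ again.

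Next, for an arbitrary Borel set $A\subset\mathbb{R}_+$ I would use the monotonicity
\begin{equation*}
Q(z,w,A)\geq Q(z,w,A\cap [h,h+1])
\end{equation*}
together with $\kappa(A)=\mathrm{Leb}(A\cap[h,h+1])$ to reduce to the case $A\subset[h,h+1]$. For such $A$, for $w\in[0,h]$ and $z\in[0,M]$, the computation of Lemma \ref{lem:queuing:minor} gives
\begin{equation*}
Q(z,w,A)\geq \P(S_0\in A+z-w)=\int_{A+z-w} f(s)\,\dint s.
\end{equation*}
The shifted set satisfies $A+z-w\subset[0,h+1+M]$, since $w\leq h$ and $z\leq M$ make $h+z-w\geq 0$ and $h+1+z-w\leq h+1+M$. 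By hypothesis, $f$ is bounded away from $0$ on every compact subset of $\mathbb{R}_+$, so there exists $c>0$ with $f\geq c$ on $[0,h+1+M]$. Translation invariance of Lebesgue measure then yields $Q(z,w,A)\geq c\,\mathrm{Leb}(A)=c\,\kappa(A)$. Setting $\alpha:=1-\min(c,1/2)\in(0,1)$ finishes the proof.

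There is no serious obstacle: boundedness of $Z_0$ collapses the tail issue that drove the whole computation of Section \ref{p1}. A pleasant side-effect, to be used immediately afterwards in the proof of Theorem \ref{thm:queue1}, is that $\alpha$ can be chosen constant, so $\E[\alpha^n(Z_0)]=\alpha^n$ decays geometrically and Assumption \ref{as:myas} holds trivially for any $\theta\in(0,1)$; Theorem \ref{thm:TV} then delivers both the rate $\exp(-c_2 n^{2/3})$ promised in Theorem \ref{thm:queue} and the associated law of large numbers (via ergodicity of $(Z_n)_{n\in\mathbb{N}}$).
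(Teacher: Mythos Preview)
Your proof is correct and follows essentially the same route as the paper: bound $Z_0$ by $M$, observe $V^{-1}([0,R])=[0,h]$, reduce to $A\subset[h,h+1]$, note that the shifted set $A+z-w$ lies in the fixed compact $[0,h+1+M]$, and use that $f$ is bounded below by a positive constant there. (If anything, your version is slightly cleaner: the paper's intermediate bound $f(z+h+1)\kappa(A)$ tacitly uses monotonicity of $f$, which is not assumed in Theorem~\ref{thm:queue1}, whereas you pass directly to the infimum over the compact.)

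One small correction to your closing remark: the rate delivered by Theorem~\ref{thm:TV} here is $\exp(-c_2 n^{1/3})$, not $\exp(-c_2 n^{2/3})$. With constant $\alpha$ and $\kappa=1/(3(1-\theta))$ one always has $\kappa(1-\theta)=1/3$, so the summand in \eqref{zoroaster} is of order $\alpha^{\nu_2 k^{1/3}}$ regardless of $\theta$; this tail sum, not the $e^{-\nu_1 n^{2/3}}$ term, dominates.
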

\begin{proof} Let $M>0$ be such that $|Z_{0}|\leq M$ almost surely. As in Lemma \ref{lem:queuing:DriftAndLT},
\begin{align*}
Q(z,w,A) &\ge \P\left(S_{0}\in (A+z-w)\right)\\
         &\ge f(z+h+1)\kappa(A)\\
         &\ge [\inf_{v\in [0,M+h+1]}f(v)]\kappa(A),	
\end{align*}
which proves the statement since $f$ is bounded away from $0$ on compacts. In fact, one needs $f$ to be bounded
away from $0$ only on a \emph{fixed} compact which, however, depends on the laws of $S_{0}$, $Z_{0}$.
\end{proof}

\begin{proof}[Proof of Theorem \ref{thm:queue1}.] 
Assumption \ref{as:minor} holds by Lemma \ref{lem:queuing:minor1} and Assumption \ref{as:myas} holds trivially.
Lemma \ref{lem:queuing:DriftAndLT} implies Assumptions \ref{as:drift} and \ref{as:LT}. We can conclude
from Theorem \ref{thm:TV} with the choice $\theta=1/2$, noting that, in the present case,
\begin{equation*}
	\E^{\frac{\nu_{2}}{k^{1/3}}}[\alpha^{k^{2/3}-1}(Y_0)]]\leq
	\frac{1}{\alpha^{\nu_{2}}}\alpha^{\nu_{2}k^{1/3}}
\end{equation*}
and 
\begin{equation*}
	\sum_{k=n}^{\infty}\alpha^{\nu_{2}k^{1/3}}\leq C_{\sharp}\alpha^{\nu_{\sharp}k^{1/3}}
\end{equation*}
for some $C_{\sharp},\nu_{\sharp}>0$.
\end{proof}

\textbf{Aknowledgments.} Both authors thank for the support received from the ``Lend\"ulet'' 
grant LP 2015-6 of the Hungarian Academy of Sciences.

\end{document}